\theoremstyle{plain}
\newtheorem{theorem}{Theorem}[section]
\theoremstyle{definition}
\newtheorem{example}[theorem]{Example}
\newtheorem{conjecture}[theorem]{Conjecture}
\newcommand{\G}{X}
\DeclareMathOperator{\Aut}{Aut}
\title{Edge-transitive token graphs as covers}
\author{Sergio G. G\'omez-Galicia, Octavio B. Zapata-Fonseca}
\date{May 2025}
\begin{document}
\maketitle

\begin{abstract}

This paper uses the theory of covering graphs  to characterize  some of the  edge-transitive graphs which can arise as token graphs. 

\end{abstract}

\section{Introduction}
\label{sec:intro}

All the edge-transitive graphs which can arise as token graphs 
have been classified by Zhang and Zhou in~\cite{ZZ}. 
The aim of the present paper is to characterize some of these edge-transitive token graphs as covering graphs. 
Before stating our main result precisely, Theorem~\ref{theo_1}, we will discuss the background to this result in both finite group theory and topological graph theory.
The graphs we consider will be directed or undirected, with or without loops, and with or without multiple edges. 
We will be specific. 

An \emph{automorphism} of a finite graph $X$ is a permutation of the vertex set $V(X)$ that preserves all the pairs of vertices belonging to the edges $E(X)$. 
These permutations of $V(X)$ form a group under composition, called the \emph{automorphism group} of $X$,  denoted by $\Aut(X)$. 
For the complete graph $K_n$ on $n$ vertices,  
$\Aut(K_n)$ is isomorphic to the symmetric group $S_n$ of degree $n$.  
If $K_{m,n}$ is the complete bipartite graph with $m$ independent vertices of degree $n$ and $n$ independent vertices of degree $m$,  then $\Aut(K_{m,n})$ is isomorphic to the Cartesian product $S_m \times S_n$ when $m\neq n$, and to the semidirect product $S_n^2 \rtimes \mathbb{Z}_2$ when $m = n$. 

A graph $X$ is said to be \emph{edge-transitive} if the action of $\Aut(X)$ on $E(X)$ has a single orbit. 
Such a graph may or may not be vertex-transitive. For example, the complete graph $K_n$ is vertex-transitive and edge-transitive. 
The complete bipartite graph $K_{m,n}$ is edge-transitive, and it is vertex-transitive only when $m=n$. 
Assuming that $X$ is a simple undirected graph without isolated vertices (i.e., all the vertices of $X$ have degree greater than one),  if $X$ is edge-transitive but not vertex-transitive, then the action of $\Aut(X)$ on $V(X)$ has exactly two orbits. 
These two orbits form a bipartition of $X$ (see, e.g.~\cite[Lemma 3.2.1]{AGT}), and so all graphs that are edge-transitive but not vertex-transitive are bipartite.

A bipartite graph such that all the vertices in the same part of the bipartition have the same degree is called \emph{biregular}. 
Every regular bipartite graph is biregular. 
The edge-transitive graphs which are not vertex-transitive are biregular. 
In this paper, we study certain edge-transitive biregular graphs which are not complete bipartite.  
Specifically,  we study the edge-transitive  graphs that arise from a construction known as the token graph.

Let $\G$ be a finite, undirected, and simple graph with $n$ vertices, and let $1 \le k \le n-1 $ be an integer. 
The \emph{$k$-token graph} $F_k(\G)$ of $\G$ is the graph 
with the set of all $k$-subsets of $V(X)$ as vertex set, where two vertices are adjacent when their symmetric difference is an edge of $\G$. 
Token graphs were introduced by Fabila-Monroy, Flores-Peñaloza, Huemer, Hurtado, Urrutia, and Wood~\cite{FFHH}. 
However, these graphs have been independently introduced several times under different names. See, e.g.,~\cite{John,ABE,AGR}.

It is easy to see that $F_{k}(\G) \cong F_{n-k}(\G)$. 
Therefore we always assume $1 \leq k \leq n/2$. 
Moreover, throughout the rest of the paper, we assume 
$V(X)  = \{1, \dots, n\}$. 
Thus, 
\[
|V(F_{k}(\G))| =\binom{n}{k}. 
\]
The $k$-token graph $F_k(K_n)$ of the complete graph $K_n$ is isomorphic to the \emph{Johnson graph} $J(n,k)$, defined as the graph with vertex set all the $k$ element subsets of a set with $n$ elements, where two vertices of $J(n,k)$ are adjacent whenever they intersect in exactly $k-1$ elements. 
Johnson graphs are examples of graphs with  a vertex-transitive and edge-transitive group of automorphisms which are not complete.

As we mentioned before, our intention is to characterize all the edge-transitive token graphs  from the following classification~\cite[Theorem 1.2]{ZZ}. 

\begin{theorem}[\cite{ZZ}]
Let $X$ be a connected graph. 
The token graph $F_k(X)$  is edge-transitive if and only if one of the following holds:
\begin{enumerate}[(1)]
    \item $X\cong K_n$ and $2\leq k \leq n-1$;
    \item $X\cong K_{1,n}$ and $2\leq k \leq n$;
    \item $X\cong K_{2,n}$ and $k = (n+2)/2$;
    \item $X\cong K_{n,n}$ and $k = 2$ or $k=2(n-1)$.
\end{enumerate}
\end{theorem}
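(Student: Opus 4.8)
The plan is to treat the two implications separately: the ``if'' direction by displaying, in each of the four cases, a subgroup of $\Aut(F_k(X))$ that is transitive on the edge set, and the ``only if'' direction by first controlling $\Aut(F_k(X))$ and then running through the connected graphs that survive the resulting constraints. I will use the canonical description of an edge of $F_k(X)$ as a pair $(S,e)$ with $e = uv \in E(X)$ and $S \in \binom{V(X)\setminus\{u,v\}}{k-1}$, with endpoints $S\cup\{u\}$ and $S\cup\{v\}$; and I record that, since complementation does not change symmetric differences, the map $A \mapsto V(X)\setminus A$ is always an automorphism of $F_k(X)$ when $n=2k$, acting on edges by $(S,e)\mapsto\big((V(X)\setminus e)\setminus S,\, e\big)$. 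For $X\cong K_n$ the graph $F_k(X)$ is the Johnson graph $J(n,k)$ and $S_n=\Aut(K_n)$ is already transitive on the pairs $(S,e)$. For $X\cong K_{1,n}$, sorting the $k$-subsets by whether they contain the centre identifies $F_k(X)$ with the bipartite graph joining the $(k-1)$-subsets and the $k$-subsets of an $n$-set by containment, on which $S_n$ is transitive. For $X\cong K_{n,n}$ with $k=2$, the pair $(S,e)$ amounts to one token $\{x\}$ together with an edge $e$ of $K_{n,n}$ avoiding it; the two cases according to the side of $x$ are interchanged by the part-swapping involution of $\Aut(K_{n,n})\cong S_n^2\rtimes\mathbb{Z}_2$, and $S_n^2$ is transitive within each case, so $\Aut(K_{n,n})$ is transitive on $E(F_2(X))$ (the subcase $k=2(n-1)$ follows from $F_k\cong F_{|V(X)|-k}$). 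Finally, for $X\cong K_{2,n}$ with $k=(n+2)/2$ one has $|V(X)|=2k$; here $\Aut(K_{2,n})\cong S_2\times S_n$ has at most two orbits on the sets $S$ complementary to a fixed edge $e$, and the complementation automorphism above identifies them, so $\langle \Aut(K_{2,n}),\,\text{complementation}\rangle$ is transitive on $E(F_k(X))$.

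For the ``only if'' direction, suppose $F_k(X)$ is edge-transitive; using $F_k\cong F_{|V(X)|-k}$ we may assume $2\le k\le n/2$. The crucial input, which I would import from the existing determination of automorphism groups of token graphs, is that for $2\le k\le n-2$ the group $\Aut(F_k(X))$ is generated by the image of $\Aut(X)$ together with complementation in case $n=2k$, apart from a short, explicit list of small exceptional $X$ (for instance $X\cong C_4$, where $F_2(C_4)\cong K_{2,4}$ acquires the larger group $S_2\times S_4$), which would be handled by direct inspection. Granting this, any automorphism of $F_k(X)$ sends a pair $(S,e)$ to a pair $(S',e')$ with $e'$ in the $\Aut(X)$-orbit of $e$, and the stabiliser of an $F_k$-edge stabilises its underlying $X$-edge $\{u,v\}$. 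Hence edge-transitivity of $F_k(X)$ forces: (i) $X$ is edge-transitive; and (ii) the setwise stabiliser $H$ of $\{u,v\}$ in $\Aut(X)$, enlarged by the involution $S\mapsto\big(V(X)\setminus\{u,v\}\big)\setminus S$ when $n=2k$, acts transitively on $\binom{W}{k-1}$, where $W:=V(X)\setminus\{u,v\}$.

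It remains to classify the connected $X$ satisfying (i) and (ii). By the lemma quoted in the introduction, $X$ is vertex-transitive or bipartite with its two parts as vertex-orbits, and in either situation $H$ preserves the partition $W=W_0\cup W_1\cup W_2$ with $W_j:=\{w\in W: |N_X(w)\cap\{u,v\}|=j\}$. When $n\ne 2k$, transitivity of $H$ on $\binom{W}{k-1}$ with $k\ge 2$ forces $H$ transitive on $W$, hence $W$ lies in one $W_j$: $W=W_0$ contradicts connectedness; $W=W_2$ forces $X\cong K_n$; and $W=W_1$ forces, applying (i) at every edge, that $X$ is complete bipartite. Combining this with the complete-bipartite case and re-examining (ii) — in particular noting that for a complete bipartite graph $H$ is transitive on $W$ only when one part is a singleton or the two parts have equal size — one obtains exactly $X\cong K_n$, $X\cong K_{1,n}$, and $X\cong K_{n,n}$ with $k=2$. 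When $n=2k$, the same partition argument with the extra involution allowed shows that the achievable triples $\big(|S\cap W_0|,|S\cap W_1|,|S\cap W_2|\big)$ must, after elimination of a few small graphs, form a single orbit of size at most two under $f\mapsto(|W_0|,|W_1|,|W_2|)-f$, which again forces two of the $W_j$ to vanish; re-running (ii) in the surviving complete and complete-bipartite cases now yields, besides the families already found, precisely $X\cong K_{2,n}$ with $k=(n+2)/2$. Tracking the admissible $k$ in each family and unfolding the reduction $k\le n/2$ gives the four cases in the statement.

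The step I expect to be the main obstacle is the automorphism-group input: showing that a token graph has no unexpected automorphisms beyond those induced from $X$ and from complementation is genuinely delicate, and the sporadic exceptions have to be enumerated and checked one by one. Once that is available, the rest is orbit bookkeeping, the only mildly subtle point being the middle case $n=2k$, where one must verify that adjoining a single involution to $H$ cannot create transitivity on $\binom{W}{k-1}$ unless $W$ is almost structureless relative to the chosen edge. An alternative that avoids importing the automorphism-group theorem would be to prove directly and $\Aut(F_k(X))$-equivariantly that $E(X)$ is recoverable from $F_k(X)$ — an $X$-edge being precisely a class of $F_k$-edges joined through a distinguished type of $4$-cycle — but this route looks no less technical.
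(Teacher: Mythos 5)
First, a point of order: the paper does not prove this statement. It is quoted verbatim from Zhang and Zhou~\cite{ZZ} as background (the paper's contribution, Theorem~\ref{theo_1}, is about realizing some of these graphs as covers), so there is no in-paper proof to compare you against; what follows measures your proposal against what a proof would actually require and against the published argument in~\cite{ZZ}. Your sufficiency direction is fine: the description of an edge of $F_k(X)$ as a pair $(S,e)$, the identification of $F_k(K_{1,n})$ with the containment graph between $(k-1)$-sets and $k$-sets, and the use of complementation when $|V(X)|=2k$ to fuse the two $S_2\times S_n$-orbits of $(k-1)$-subsets of $W$ in the $K_{2,n}$ case are all correct and would survive being written out.

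The necessity direction, however, has a genuine gap, and it is exactly the one you flag: you import ``the existing determination of automorphism groups of token graphs,'' i.e.\ that for $2\le k\le n-2$ every automorphism of $F_k(X)$ is induced by one of $X$, together with complementation when $n=2k$, apart from an explicit list of exceptions. No such theorem is available for general connected $X$; this is essentially the open conjecture on $\Aut(F_k(G))$ going back to Fabila-Monroy et al., known only for special families (trees, some $k=2$ cases, and $X\cong K_n$, where it is the classical statement about Johnson graphs). Everything downstream in your argument --- that $X$ must itself be edge-transitive, and that the edge-stabiliser $H$ must act transitively on $\binom{W}{k-1}$ --- rests on this unproved input, which is arguably harder than the theorem you are trying to prove. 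The proof in~\cite{ZZ} does not take this route: it leans on necessary conditions that hold for an edge-transitive graph irrespective of what its full automorphism group is, the key one being that such a graph has at most two distinct vertex degrees, while the degree of a vertex $A$ in $F_k(X)$ equals the number of edges of $X$ in the cut between $A$ and $V(X)\setminus A$; this already forces strong regularity on $X$ and drives the case analysis without ever identifying $\Aut(F_k(X))$. To salvage your approach you would need either to prove the automorphism statement for the graphs surviving such preliminary constraints, or to replace your condition (ii) by something checkable without knowing the automorphism group --- e.g.\ your own closing suggestion of recovering $E(X)$ equivariantly from $F_k(X)$, which is itself nontrivial. A secondary, lesser issue is that the endgame is compressed: the step ``$W=W_1$ forces $X$ complete bipartite'' and the orbit-merging analysis when $n=2k$ are plausible and do land on the correct list, but as written they are assertions rather than arguments.
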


We note that the family of edge-transitive token graphs  $F_k(K_{1,n})$ contains concrete examples of edge-transitive biregular graphs.
For instance, the token graphs $F_k(K_{1,n})$ for $n$ odd are the doubled Johnson graphs, and the token graphs $F_2(K_{1,n})$ are the barycentric subdivisions of the complete graphs $K_n$.

\section{Combined base graphs}
Edge-transitive biregular graphs have been extensively studied by the theory of covering graphs. 
We now introduce the notion of combined base graphs, and give a way to construct covering graphs above them. 
For edge-transitive biregular graphs, this construction was first studied under the name \emph{graph of the completion} of an amalgam in~\cite{Gold}. 
It has also been studied under the names \emph{coset graph} in~\cite{GLP},  \emph{double coset graph} in~\cite{DX}, and \emph{generalized cover graph} in~\cite{PT}. 
Covering graphs are defined in many texts, see e.g.,~\cite{biggs,AGT,GT,sunada}.

Let $G$ be a group, and let $H < G$ be a subgroup. The set of right cosets of $H$ in $G$ is denoted by $G / H$, and the index of $H$ in $G$ is denoted by $[G:H]$.

A \emph{combined voltage assignment} on $\G$ in a group $G$ is a pair of functions $(\alpha,\omega)$, where $\alpha$ is a voltage assignment $\alpha:E(\G)\to G$, and $\omega$ is a map that sends each vertex $x\in V(\G)$ to a subgroup $\omega(x)<G$. 
The graph $\G$ is called a \emph{combined base graph} if it is equipped with the combined voltage assignment $(\alpha,\omega)$ for some group $G$.

The \emph{covering graph} $\G^{(\alpha,\omega)}$ of the combined base graph $\G$ with respect to the group $G$ is the graph with vertex set
\[V^{(\alpha,\omega)}:=\{(x,K) \mid x\in V(\G) \text{ and } K\in G/\omega(x)\},\] 
and the edges of $\G^{(\alpha,\omega)}$ defined as follows: for each edge $xy\in E(\G)$ with an assigned voltage $\alpha(xy)\in G$, there is an edge in the covering graph $\G^{(\alpha,\omega)}$ from a vertex $(x,K)$, for some $K\in G/\omega(x)$, to a vertex $(y,H)$, for some $H\in G/\omega(y)$ if and only if $K\alpha(xy) \cap H\neq \emptyset$. 
If $\omega$ maps each vertex $x\in V(X)$ to the trivial group $\{0\}$, then $\omega$ is the trivial map $0$, the base graph $X$ is the so-called \emph{voltage graph}, and the covering graph $X^{(\alpha,0)}$ is called the covering lift (sometimes called derived graph) of the voltage graph $X$. 

\section{The token graph $F_2({K_n})$ as a covering graph}
\label{sec:F2}

There are several graphs that can be obtained as covering graphs of combined base graphs. 
For example, it is well known that Cayley graphs can be obtained as covers of base graphs with a single vertex incident with multiple loops (see \cite{GT}). 
Another interesting example is the line graph $L(K_n)$ of the complete graph $K_{n}$, also known as the triangular graph $T(n)$.
This graph is a strongly regular graph with parameters $(\binom{n}{2},2(n-2),n-2,4)$ if $n\geq 4$. 
For $m = 5$ it is the complement of the Petersen graph.
There is an isomorphism between the strongly regular graph $T(n)\cong L(K_{n})$ and $F_2({K_n})$ given by the mapping $ij\mapsto \{i,j\}$.

Dalf{\'o}, Fiol, Pavl{\'\i}kov{\'a}, and Sir{\'a}n  \cite{DFP} proved that for $n$ odd, the graph $T(n) \cong F_2({K_n})$ can be obtained as a cover, and also mentioned, without proving, that the token graph $F_2(K_n)$ with $n$ even can be obtained as a cover.

In this paper we construct a combined base graph, and give an isomorphism between its covering graph and the token graph $F_2(K_n)$ for $n$ even. 

\begin{theorem} \label{theo_1}
Let $n$ be an even number. 
The token graph $F_2(K_n)$ is isomorphic to the covering graph $\G^{(\alpha,\omega)}$ of a combined base graph $\G$ on $n/2$ vertices and with combined assignment on the group $\mathbb{Z}_{n}$.
\end{theorem}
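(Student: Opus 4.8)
The plan is to realize $F_2(K_n)$ as $\binom{n}{2}$ vertices organized into $n/2$ fibers, each of size $n$, indexed by the cosets $\mathbb{Z}_n/H$ for appropriate subgroups $H$ of $\mathbb{Z}_n$. The guiding idea is that a $2$-subset $\{i,j\}$ of $\{0,1,\dots,n-1\}$ is determined by its ``difference'' $d \equiv \pm(i-j) \pmod n$ (a value in $\{1,\dots,n/2\}$) together with its ``sum'' $s \equiv i+j \pmod n$. For a difference $d$ with $1 \le d \le n/2-1$, the pair $\{i,j\}$ is recovered unambiguously from $s \in \mathbb{Z}_n$, so the fiber over the base vertex $x_d$ should be a full copy of $\mathbb{Z}_n$; this corresponds to taking $\omega(x_d)$ trivial. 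For the extreme difference $d = n/2$, the map $s \mapsto \{i, i+n/2\}$ is two-to-one (since $s$ and $s+n/2$ give the same pair, as $\{i,i+n/2\}=\{i+n/2,i\}$), so the fiber over $x_{n/2}$ has only $n/2$ elements; this corresponds to setting $\omega(x_{n/2})$ equal to the unique subgroup of order $2$, namely $\langle n/2 \rangle \cong \mathbb{Z}_2$. Thus the base graph $X$ has vertex set $\{x_1,\dots,x_{n/2}\}$, with $\omega(x_d)=\{0\}$ for $d<n/2$ and $\omega(x_{n/2})=\langle n/2\rangle$, matching the required count $n/2$ of base vertices.

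Next I would determine the edges of the base graph and their voltages by reading off adjacencies in $F_2(K_n)$. Two $2$-subsets are adjacent in $F_2(K_n)=T(n)$ exactly when they share one element; writing $\{i,j\}$ and $\{i,\ell\}$ with $j\ne\ell$, the differences change from $d=\pm(i-j)$ to $d'=\pm(i-\ell)$, and the sums change from $s=i+j$ to $s'=i+\ell = s + (\ell-j)$. So an adjacency produces an edge between base vertices $x_d$ and $x_{d'}$ (possibly $d=d'$, giving a loop, when $\ell - j = \pm 2i$ forces the same difference class) with a voltage equal to the shift $\ell - j \in \mathbb{Z}_n$. I would enumerate, for each ordered pair $(d,d')$, the possible values of $\ell - j$ compatible with fixed differences $d,d'$; a short computation shows $\ell-j$ can take exactly two values for each unordered pair $\{d,d'\}$ with $d\ne d'$, namely $\pm(d+d')$ and $\pm(d-d')$ up to sign, which should become two parallel edges between $x_d$ and $x_{d'}$ carrying opposite (or appropriately related) voltages, and similarly loops at each $x_d$. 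This dictates the full combined voltage assignment $(\alpha,\omega)$ on $X$.

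With the combined base graph $(X,\alpha,\omega)$ in hand, the final step is to write down the explicit bijection $\Phi\colon V^{(\alpha,\omega)}\to V(F_2(K_n))$ and verify it is a graph isomorphism. On fibers over $x_d$ with $d<n/2$, send $(x_d, s+\{0\})$ to the unique pair $\{i,j\}$ with $i+j\equiv s$ and $i-j\equiv \pm d \pmod n$; over $x_{n/2}$, send $(x_{n/2}, s+\langle n/2\rangle)$ to $\{i, i+n/2\}$ where $2i\equiv s$ — here I must check this is well-defined on the coset, which it is precisely because $s$ and $s+n/2$ yield the same unordered pair. Well-definedness and bijectivity are then routine; the content is checking edge-preservation in both directions, i.e. that the covering-graph adjacency condition ``$K\alpha(e)\cap H\ne\emptyset$'' translates exactly into ``the two $2$-subsets share an element.'' I expect the main obstacle to be the bookkeeping for the extreme vertex $x_{n/2}$: because its fiber is a quotient of $\mathbb{Z}_n$, the adjacency condition involves coset intersection rather than equality, and one must confirm that the two voltages on a parallel pair of edges incident to $x_{n/2}$ interact with the subgroup $\langle n/2\rangle$ so as to produce exactly the right number of neighbors (degree $2(n-2)$) with no spurious doubling or collapsing — this is where a careless choice of voltages would fail, and it is worth isolating as a separate lemma or explicit case check.
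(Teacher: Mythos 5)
Your overall architecture agrees with the paper's: base vertices indexed by the $n/2$ circular difference classes, $\omega$ trivial everywhere except at $x_{n/2}$, where it is the order-two subgroup $\langle n/2\rangle$, and the vertex count $(n/2-1)n+n/2=\binom{n}{2}$ comes out right. The gap is in the explicit bijection, and it is not a bookkeeping issue at $x_{n/2}$ but a failure on every fiber. You index the pairs of difference $d$ by their sum $s=i+j\in\mathbb{Z}_n$ and assert that for $1\le d\le n/2-1$ the pair is recovered unambiguously from $s$. That is precisely what breaks when $n$ is even, the only case the theorem addresses: recovering $\{i,j\}$ from $(d,s)$ means solving $2i\equiv s+d\pmod n$, and doubling is not invertible modulo an even $n$. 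When $s+d$ is odd there is no solution; when it is even there are two, $i$ and $i+n/2$, which produce the two \emph{distinct} pairs $\{i,j\}$ and $\{i+n/2,j+n/2\}$ having the same difference and the same sum. Concretely, for $n=6$ and $d=1$ the pairs $\{0,1\}$ and $\{3,4\}$ both have sum $1$, while no difference-$1$ pair has sum $0$. So the sum map is two-to-one onto a coset of $2\mathbb{Z}_n$ of size $n/2$, not a bijection with $\mathbb{Z}_n$: your $\Phi$ is neither well defined nor surjective onto the fiber, and the same obstruction hits the fiber over $x_{n/2}$, where $2i\equiv s$ need not be solvable. (Your scheme would be fine for $n$ odd, where doubling is a bijection --- but that is the already-known case of \cite{DFP}.)

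The repair is the parameterization the paper actually uses: index the difference-$d$ pairs by a chosen endpoint, $j\mapsto\{1+j,1+j+d\}$. This is a genuine bijection with $\mathbb{Z}_n$ for $1\le d<n/2$ and is two-to-one exactly for $d=n/2$, where $j$ and $j+n/2$ yield the same pair --- which is what the coset space $\mathbb{Z}_n/\langle n/2\rangle$ records. The voltages must then be recomputed relative to this parameterization: two pairs $\{1+a,1+a+i\}$ and $\{1+b,1+b+j\}$ can share either endpoint of either pair, giving four parallel edges between $x_i$ and $x_j$ with voltages $0$, $i$, $n-j$, $n-j+i$ (the four possible shifts $b-a$), plus a loop at $x_i$ accounting for the within-class shifts $b-a=\pm i$; these are not the sum-shifts $\pm(d\pm d')$ your scheme would assign. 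With that change your degree check goes through (the four edges into $x_{n/2}$ collapse in pairs under $\langle n/2\rangle$, giving $4(n/2-2)+2+2=2(n-2)$), and the rest of your outline matches the paper's proof.
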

\begin{proof}
We construct the combined based graph $\G$ on the group $\mathbb{Z}_{n}$ as follows. 
Since $n$ is even, $n/2$ is an integer. 
Let the $n/2$ vertices of $X$ be enumerated as
\[
x_1,x_2,\dots,x_{n/2}.
\]
The map $\omega$ that sends each vertex $x_i$ to a subgroup of $\mathbb{Z}_n$  is defined by 
\[
	\omega(x_i):=
	\begin{cases}
        0\mathbb{Z}_n=\{0\} & \textnormal{ if }  1\leq i < n/2, \\
		i\mathbb{Z}_n =\{0,i\} & \textnormal{ if }  i=n/2.
	\end{cases}
  \]  
 We remark that $\G$ contains multiple edges and loops. Now we define the voltage assignment $\alpha$ in $\G$: 
\begin{itemize}
    \item If $1\leq i,j\leq n/2$, $i\neq j$, then there is an edge $e_{i,j}:=x_ix_j$ such that $\alpha(e_{i,j})=0$.
    \item If $1\leq i < n/2$, then $x_i$ has a loop $x_ix_i$.
    \item If $1\leq i < n/2$ and $j>i$, then
    \begin{itemize}
        \item Each vertex $x_i$ has a loop $x_ix_i$ such that $\alpha(x_ix_i)=1$.
        \item There are the following edges: $e'_{i,j}=x_ix_j$, $f_{i,j}=x_ix_j$ and $f'_{i,j}=x_ix_j$, such that $\alpha(e'_{i,j})=i$, $\alpha(f_{i,j})=n-j+i$ and $\alpha (f'_{i,j})=n-j$.
    \end{itemize}
\end{itemize}
 
 Now we show that the covering graph $\G^{(\alpha,\omega)}$ is isomorphic to $F_{2}(K_n)$. 
 First observe that 
 \[|V^{(\alpha,\omega)}|=|V(F_2(K_n))|,\]
 since the number of vertices of $\G^{(\alpha,\omega)}$ is given by
\[
\sum_{i=1}^{n/2}[\mathbb{Z}_n:\omega(x_i)]= \sum_{i=1}^{n/2-1}[\mathbb{Z}_n:\omega(x_i)]+ [\mathbb{Z}_n:\omega(x_{n/2})]=\left(\frac{n}{2}-1\right)n+\frac{n}{2}=\binom{n}{2}.
\]

Now we define the map $\varphi$ from $\G^{(\alpha,\omega)}$ to $F_{2}({K_n})$ as follows.
\begin{itemize}
    \item If $ 1\leq i< n/2$, then
\[
\varphi\left((x_i,\{j\})\in V^{(\alpha,\omega)}\right)= \{1+j\mod{n},1+j+i \mod{n}\}
\]
for all $0\leq j\leq n-1$.
    \item If $i=n/2$, then 
 \[
\varphi\left((x_{i},\{j,n/2+j\}\in V^{(\alpha,\omega)}\right)= \{1+j \mod{n},1+j+n/2 \mod{n}\}
 \]
 for all  $0\leq j\leq n-1$.
\end{itemize}

 By construction of the combined voltage assignment $(\alpha, \omega)$ on the base graph $\G$, it can be seen that the map $\varphi$ defined above is an isomorphism between the covering graph $\G^{(\alpha,\omega)}$ and the token graph $F_2(K_n)$.
\end{proof}
\begin{figure}[h!]
        \centering
        \includegraphics[width=0.85\linewidth]{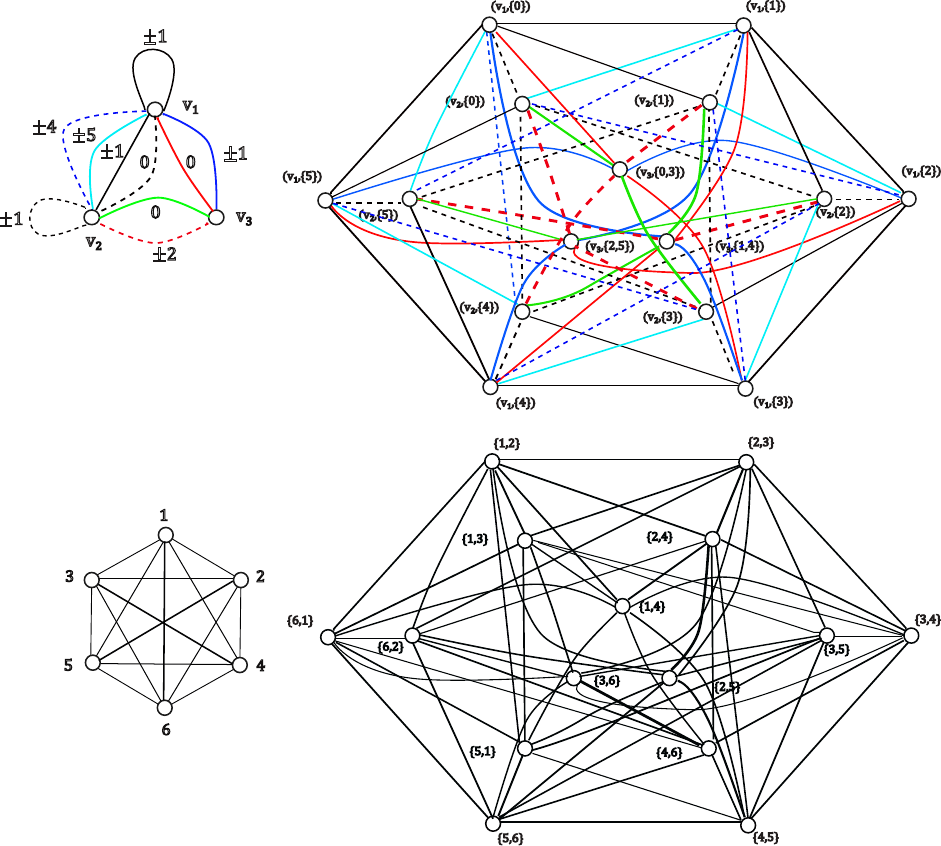}
         \caption{A combined based graph with cover isomorphic to $F_{2}(K_6)$}
        \label{fig:ex_1}
    \end{figure}
\begin{example}
Consider a combined base graph $\G$ with a combined voltage assignment $(\alpha,\omega)$ in $\mathbb{Z}_{6}$ described as in Theorem \ref{theo_1}. Then the covering graph $\G^{(\alpha,\omega)}$ is isomorphic to $F_2(K_6)$.
 The combined base graph $\G$ is shown in the upper left part of Figure \ref{fig:ex_1}, and to the right its covering graph $\G^{(\alpha,\omega)}$ is shown;  in the lower left part we can see the complete graph $K_6$, and to the right we can see its $2$-token graph.
\end{example}

\section{The token graph $F_{k}(K_{1,n})$ as a covering graph}

In \cite{DDF} it was observed that the $k$-token graph $F_k(K_{1,n})$ of the star graph $K_{1,n}$ when $k=(n+1)/2$ (and so, $n$ is odd) is a particular case of the \emph{doubled Johnson graphs}. It should be noted that this family of graphs is relevant since they are distance biregular graphs which are not bipartite distance regular (see, e.g.~\cite{Delorme,GS}).

Now we show that, for some particular cases, the graph $F_{k}(K_{1,n})$ can be obtained as a covering graph of a combined base graph with trivial map $\omega = 0$.
When $n=3$, it is straightforward to see that $F_{k}(K_{1,n})$ is isomorphic to the cycle $C_6$, and thus it can be obtained as a cover of a so-called voltage graph (since any cycle is a Cayley graph). 
We can extend this result to the case when $n=5$ and $k=3$. 
In the upper left part of Figure \ref{fig:ex_2} we can see the star $K_{1,5}$; in the lower left part it is shown a base graph; finally on the right side, it can be observed the $3$-token graph of $K_{1,5}$ seen as both a token graph and a cover of the base graph. 
\begin{figure}[htp!]
        \centering
     \includegraphics[width=0.7\linewidth]{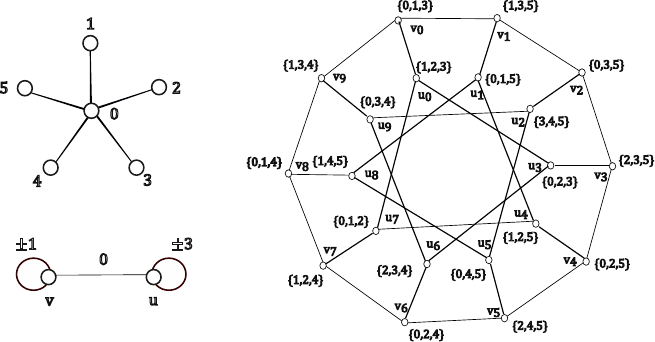}
         \caption{The star $K_{1,5}$, its $3$-token graph, and the base graph whose cover is isomorphic to $F_3(K_{1,5})$}.
        \label{fig:ex_2}
\end{figure}
\begin{conjecture}
\label{conj:1}
Let $n$ be an odd number and $k = (n+1)/2$. 
The token graph $F_k(K_{1,n})$ is isomorphic to the covering graph $\G^{(\alpha, \omega)}$ of a base graph $\G$ on $\frac{1}{2n}\binom{2k}{k}$ vertices and with combined assignment on the group $\mathbb{Z}_{2n}$.
\end{conjecture}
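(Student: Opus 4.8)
\textbf{Proof proposal for Conjecture~\ref{conj:1}.}
The plan is to mimic the construction used in Theorem~\ref{theo_1}, but now with a \emph{trivial} vertex map $\omega=0$, so that the covering graph is an ordinary voltage-graph lift in $\mathbb{Z}_{2n}$. First I would identify the vertex set of $F_k(K_{1,n})$ concretely: label the center of the star by $0$ and the leaves by $\{1,\dots,n\}$; a vertex of $F_k(K_{1,n})$ is a $k$-subset $S$ of $\{0,1,\dots,n\}$, and adjacency swaps one leaf for another leaf through the center, i.e. $S\sim S'$ iff $S\triangle S' = \{i,0\}$ for some leaf $i$ (when $0\notin S$) — equivalently one may think of a token on the center being moved to a free leaf, or a token on a leaf being moved to the center. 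The key combinatorial observation is that the $\binom{n+1}{k}$ vertices split according to whether the center $0$ is occupied: vertices with $0\in S$ correspond to $(k-1)$-subsets of leaves, vertices with $0\notin S$ to $k$-subsets of leaves, and since $k=(n+1)/2$ these two families have the same size $\binom{n}{k}=\binom{n}{k-1}$; together $\binom{n+1}{k} = 2\binom{n}{k} = \tfrac{1}{2n}\binom{2k}{k}\cdot(\text{correct count})$, so I would first verify the arithmetic identity $\binom{n+1}{k} = \frac{1}{2n}\binom{2k}{k}\cdot |\mathbb{Z}_{2n}|$, i.e. that the base graph must have exactly $\frac{1}{2n}\binom{2k}{k}$ vertices when $|\mathbb{Z}_{2n}|=2n$ fibers sit over each.

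Next I would make $\mathbb{Z}_{2n}$ act on $V(F_k(K_{1,n}))$ in order to read off the base graph as a quotient. The natural candidate action combines the cyclic rotation $i\mapsto i+1$ of the $n$ leaves (thought of as $\mathbb{Z}_n$) with the extra involution toggling the center; concretely I would look for an element of order $2n$ in $\Aut(F_k(K_{1,n}))$ — the doubled Johnson graph is known to be vertex-transitive with a large automorphism group, so such an element exists — whose cyclic group $\langle \sigma\rangle\cong\mathbb{Z}_{2n}$ acts \emph{semiregularly} (freely) on the vertices. Semiregularity is exactly what forces $\omega=0$; the number of orbits is then $\binom{n+1}{k}/(2n) = \frac{1}{2n}\binom{2k}{k}$, which will be the vertex set $V(\G)$. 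The base graph $\G$ is the quotient multigraph $F_k(K_{1,n})/\langle\sigma\rangle$, and the voltage $\alpha(e)\in\mathbb{Z}_{2n}$ of a quotient edge is determined by which $\langle\sigma\rangle$-translate of the endpoint fiber the lifted edge reaches; the standard voltage-lift theorem (as in~\cite{GT}) then gives $\G^{(\alpha,0)}\cong F_k(K_{1,n})$ automatically, provided the action is free.

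The main obstacle, and the place where real work is needed, is producing the order-$2n$ automorphism $\sigma$ and proving it acts \emph{freely}. A plausible $\sigma$ is as follows: encode a vertex $S$ by an element of $\mathbb{Z}_{2n}$-flavored data — e.g. when $0\notin S$ write $S\subseteq\{1,\dots,n\}\subseteq\mathbb{Z}_n$, and when $0\in S$ write $S\setminus\{0\}\subseteq\{1,\dots,n\}$ but tagged; then let $\sigma$ add $1$ to every leaf label mod $n$ and, every time a leaf passes the ``seam'' (i.e. goes from $n$ back to $1$), flip the center tag, so that $\sigma^n$ is the center-toggling involution and $\sigma^{2n}=\mathrm{id}$. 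One must check (i) this is a well-defined graph automorphism — it preserves the swap-through-the-center adjacency because rotation preserves it and the tag-flip is compatible with the $0\leftrightarrow i$ moves; and (ii) no nonzero power of $\sigma$ fixes a vertex — a power $\sigma^t$ with $t$ not a multiple of $n$ moves leaves nontrivially (rotation by $t\bmod n\neq 0$ has no fixed $k$-subset when $0<t<n$, using $k=(n+1)/2<n$), while $\sigma^n$ toggles the center tag on every vertex and hence fixes none. Step (ii) is the crux; I would prove it by the elementary fact that the cyclic rotation $x\mapsto x+1$ on $\mathbb{Z}_n$ has no invariant proper nonempty subset except when the subset is a union of cosets of a subgroup, which cannot have size $k=(n+1)/2$ since $\gcd(k,n)$ does not admit it unless $k\mid n$, impossible for $k=(n+1)/2>n/2$. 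Once freeness is in hand, computing $\G$ and the voltages $\alpha$ is a finite bookkeeping exercise analogous to the explicit $F_2(K_n)$ construction above, and the small cases $n=3$ ($C_6$) and $n=5$ (Figure~\ref{fig:ex_2}) serve as sanity checks for the general pattern.
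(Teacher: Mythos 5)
The paper offers no proof of this statement---it is posed as a conjecture and supported only by the cases $n=3$ and $n=5$---so there is nothing to compare your attempt against; I am judging the plan on its own terms. Your overall strategy is the right one and, once repaired, actually settles the conjecture. The arithmetic setup is correct: since $2k=n+1$ we have $|V(F_k(K_{1,n}))|=\binom{n+1}{k}=\binom{2k}{k}$, and a sum of $\frac{1}{2n}\binom{2k}{k}$ indices $[\mathbb{Z}_{2n}:\omega(x)]$, each at most $2n$, can equal $\binom{2k}{k}$ only if every $\omega(x)$ is trivial; so the task reduces, via the Gross--Tucker theorem you cite, to exhibiting an automorphism of order $2n$ acting freely on the vertices.

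The genuine gap is your candidate $\sigma$. The two sides of the bipartition are the $(k-1)$-subsets and the $k$-subsets of the leaf set $\{1,\dots,n\}$ (according as the centre is or is not occupied), and these have \emph{different cardinalities}; so ``rotate the leaves and flip the centre tag at the seam'' does not map vertices to vertices---you cannot keep the rotated leaf set and merely toggle the tag. The element that works is $\sigma=\rho\circ c$, where $\rho$ is the leaf rotation $i\mapsto i+1\pmod n$ and $c$ is complementation $S\mapsto\{1,\dots,n\}\setminus S$ inside the leaf set: since $n-(k-1)=k$, the map $c$ swaps the two layers, and being containment-reversing it preserves the adjacency ``$A\subset B$ with $|B|=|A|+1$''; moreover $\rho$ and $c$ commute and $n$ is odd, so $\sigma$ has order $2n$ with $\sigma^n=c$. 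Freeness then goes through: odd powers of $\sigma$ swap the two layers and fix nothing, while an even power equals $\rho^{s}$ with $0<s<n$ (the case $t=n$ being excluded because $n$ is odd), and a $\rho^{s}$-invariant set is a union of orbits of size $d=n/\gcd(s,n)>1$, so $d$ would divide both $n$ and one of $k=(n+1)/2$ or $k-1=(n-1)/2$, hence $d\mid\gcd(n,n\pm 1)=1$, a contradiction. Note that your divisibility argument tests the wrong condition (``$k\mid n$''); what must be ruled out is $d\mid k$ or $d\mid k-1$ for a divisor $d>1$ of $n$. With this corrected $\sigma$ the quotient has $\binom{2k}{k}/2n$ vertices (incidentally the Catalan number $C_{k-1}$, so integrality is automatic), and the voltage-lift theorem completes the argument exactly as you outline.
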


\section{The token graph $F_2(K_{1,n})$ as a covering graph}

Now we focus on the case of the $2$-token graph of the star graph $K_{1,n}$. 
In~\cite{GT} it was observed that this graph is isomorphic $K_n$ with exactly one subdivision on each edge. The isomorphism is given by mapping $i\to \{0,j\}$ and $(ij)\to \{i,j\}$, where $0$ is the center of the star $K_{1,n}$, and $(ij)$ is the vertex obtained in the subdivision of the edge $ij$.
 In Figure \ref{fig:ex_3}, we show a base graph whose cover is isomorphic to $F_2(K_{1,5})$, which is shown on the right as both a token graph and a cover.
    \begin{figure}[htp!]
        \centering
      \includegraphics[width=0.55\linewidth]{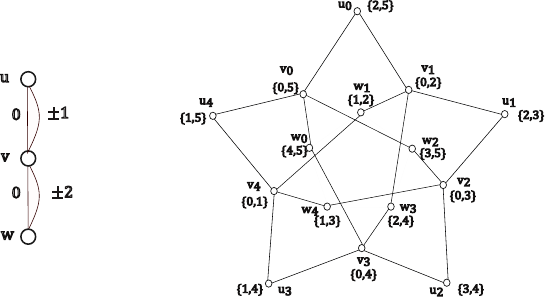}
         \caption{The star $K_{1,5}$, its $2$-token graph, and the combined graph with cover isomorphic to $F_2(K_{1,5})$}.
        \label{fig:ex_3}
    \end{figure}
We have the following conjecture which summarizes the discussion above.

\begin{conjecture}
 \label{conj:2}
 Let $n$ be a number such that $n$ divides $\binom{n+1}{2}$.
The token graph $F_2(K_{1,n})$ is isomorphic to the covering graph $\G^{(\alpha, \omega)}$ of a base graph $\G$ on $n-k$ vertices and with combined assignment on the group $\mathbb{Z}_{n}$.
\end{conjecture}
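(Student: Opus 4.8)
The plan is to realize $F_2(K_{1,n})$ as a regular $\mathbb{Z}_n$-cover: exhibit a free action of $\mathbb{Z}_n$ on it, take the base graph to be the quotient, and equip the quotient with the naturally induced voltages. First recall from Section~5 that $F_2(K_{1,n})$ is $K_n$ with one subdivision vertex in each edge; to set up the action, identify $V(K_{1,n})$ with $\{\infty\}\cup\mathbb{Z}_n$, where $\infty$ is the center, so that the vertices of $F_2(K_{1,n})$ are the pairs $\{\infty,x\}$ with $x\in\mathbb{Z}_n$ (the ``original'' vertices of $K_n$) and the pairs $\{x,y\}$ with distinct $x,y\in\mathbb{Z}_n$ (the subdivision vertices). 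Next note that $\binom{n+1}{2}=\frac{n(n+1)}{2}$, so the hypothesis ``$n$ divides $\binom{n+1}{2}$'' is simply ``$n$ is odd''; this is the hypothesis I expect to use. Let $\mathbb{Z}_n$ act by $r\colon\infty\mapsto\infty$, $x\mapsto x+r$; this induces an action by automorphisms on $F_2(K_{1,n})$ via $\{\infty,x\}\mapsto\{\infty,x+r\}$ and $\{x,y\}\mapsto\{x+r,y+r\}$.

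The key steps are then: (i) check the action is free — the stabilizer of $\{\infty,x\}$ is trivial, and the stabilizer of $\{x,y\}$ consists of the $r$ with $2r\equiv 0\pmod n$ and $r=y-x$, which is trivial exactly because $n$ is odd; (ii) conclude from the standard correspondence between free actions and voltage graphs (see the texts on covering graphs cited in Section~2) that the orbit projection $F_2(K_{1,n})\to\G:=F_2(K_{1,n})/\mathbb{Z}_n$ realizes $F_2(K_{1,n})\cong\G^{(\alpha,0)}$ for a suitable $\alpha\colon E(\G)\to\mathbb{Z}_n$, so $\omega$ can be taken trivial; (iii) make $\G$ explicit — it has one vertex $v_0$ (the orbit of the center--leaf pairs) and one vertex $v_d$ for each $d\in\{1,\dots,(n-1)/2\}$ (the orbit of the leaf pair $\{0,d\}$), each $v_d$ joined to $v_0$ by two parallel edges which, after choosing $\{\infty,0\}$ and $\{0,d\}$ as reference lifts, carry voltages $0$ and $n-d$; (iv) verify, just as in the proof of Theorem~\ref{theo_1}, that $\varphi(v_0,\{r\})=\{\infty,r\}$ and $\varphi(v_d,\{r\})=\{r,r+d\}$ (computed in $\mathbb{Z}_n$) defines a graph isomorphism $\G^{(\alpha,0)}\to F_2(K_{1,n})$ — both graphs have $\frac{n+1}{2}\cdot n=\binom{n+1}{2}$ vertices and $n(n-1)$ edges, and one matches $\varphi$ against the covering-graph incidence rule of Section~2.

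I do not expect a serious obstacle in the argument itself: the hypothesis enters only through the freeness computation in (i), and the rest is the standard regular-cover dictionary together with the bookkeeping in (iv). The point that does deserve attention is the count of base vertices: the construction gives $\frac{n+1}{2}$ of them, not $n-k$. Indeed, since a base graph with voltage group $\mathbb{Z}_n$ on $m$ vertices has at most $mn$ vertices in its cover, \emph{every} $\mathbb{Z}_n$-base graph for $F_2(K_{1,n})$ must have at least $\binom{n+1}{2}/n=\frac{n+1}{2}$ vertices; the construction above attains this bound with $\omega$ trivial. So the statement is sharp precisely when read with $\frac{n+1}{2}$ base vertices (which equals $n-2$ only for $n=5$), and that is the form the plan proves. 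Before a final write-up I would first settle whether ``$n-k$'' was meant to be $\frac{n+1}{2}$, and whether a nontrivial $\omega$ is genuinely wanted anywhere (it is for the analogous $n$ even case, where the antipodal pairs $\{x,x+n/2\}$ force a subgroup of order $2$, but not here).
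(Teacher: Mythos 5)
The paper offers no proof of this statement: it appears only as a conjecture, supported by the single example $n=5$ in Figure~\ref{fig:ex_3}, so there is no argument of the authors' to compare yours against step by step. Your proposal is, however, essentially a complete and correct proof of the conjecture \emph{as surely intended}, and it is exactly the mechanism the paper's concluding section gestures at: the hypothesis ``$n$ divides $\binom{n+1}{2}$'' is indeed equivalent to $n$ being odd; the leaf-rotation action of $\mathbb{Z}_n$ on the subdivision of $K_n$ is free on vertices precisely because $2r\equiv 0$ forces $r=0$ when $n$ is odd (and it is then automatically free on edges, since no edge joins two subdivision vertices, so no edge can be inverted); Gross--Tucker \cite[Theorem 2.2.2]{GT} then gives the voltage presentation over the quotient; and your explicit quotient ($v_0$ joined to each $v_d$, $1\le d\le (n-1)/2$, by two parallel edges of voltages $0$ and $n-d$) together with the map $\varphi(v_0,\{r\})=\{\infty,r\}$, $\varphi(v_d,\{r\})=\{r,r+d\}$ checks out against the incidence rule of Section~2, with matching vertex count $n\cdot\frac{n+1}{2}=\binom{n+1}{2}$ and edge count $n(n-1)$.

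The one genuine point of friction is the one you already flag: your base graph has $\frac{n+1}{2}$ vertices, not $n-k$ (i.e.\ $n-2$ with $k=2$), and the two agree only at $n=5$, the paper's sole illustrated case. Your counting argument correctly shows that $\frac{n+1}{2}$ is the minimum possible number of base vertices over $\mathbb{Z}_n$, so your construction is optimal; and the literal figure ``$n-k$'' is in fact \emph{unattainable} in general, not merely unattained: each fibre has size $[\mathbb{Z}_n:\omega(x)]$, a divisor of $n$, so for $n=7$ one would need five divisors of $7$ (each $1$ or $7$) summing to $\binom{8}{2}=28$, and the possible sums $7j+(5-j)$ are $5,11,17,23,29,35$ --- never $28$. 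So the statement as literally written is false for $n=7$, while the version you prove (base graph on $\frac{n+1}{2}$ vertices, trivial $\omega$) is the correct and sharp one. Your aside about the even case (the antipodal pairs $\{x,x+n/2\}$ breaking freeness and forcing an order-$2$ subgroup via $\omega$, as in Theorem~\ref{theo_1}) is also accurate.
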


\section{Some conclusions and future work}

It is well-known in the theory of covering graphs, that a  graph $X$ can be derived as a covering graph from a smaller base graph with assignment $(\alpha, 0)$ if and only if there exists a nontrivial free action of $\Aut(X)$ on the vertices of $X$ (see, e.g.~\cite[Theorem 2.2.2]{GT}).

Since the token graph $F_k(K_{1,n})$ with $k = (n + 1)/2$ is isomorphic to the doubled Johnson graph,
and  the token graph $F_2(K_{1,n})$ is isomorphic to the barycentric subdivision of the complete graph $K_n$, an immediate consequence of our conjectures, if true, would be that the automorphism groups of the doubled Johnson graphs and the subdivision of $K_{n}$ always have a nontrivial free action on the vertices, and we know  the quotient graphs. 

\bibliographystyle{elsarticle-num}

\end{document}